\documentclass[a4paper]{amsart}
\pdfoutput=1
\usepackage[T1]{fontenc}

\usepackage{amsmath, amssymb, amsthm}
\usepackage{booktabs}
\usepackage{tikz}
\usepackage{wrapfig}

\usepackage{xspace}
\newcommand{\mptopcom}{\texttt{mptopcom}\xspace}
\newcommand{\TOPCOM}{\texttt{TOPCOM}\xspace}
\newcommand{\polymake}{\texttt{polymake}\xspace}

\newcommand{\RR}{\mathbb{R}}

\DeclareMathOperator{\gkz}{gkz}
\DeclareMathOperator{\vol}{vol}
\DeclareMathOperator{\conv}{conv}

\usepackage[kerning,spacing,tracking]{microtype}

\theoremstyle{theorem}
\newtheorem{theorem}{Theorem}
\theoremstyle{plain}
\newtheorem{proposition}[theorem]{Proposition}
\newtheorem{lemma}[theorem]{Lemma}
\theoremstyle{definition}
\newtheorem{example}[theorem]{Example}
\theoremstyle{remark}
\newtheorem{remark}[theorem]{Remark}

\usepackage{color}
\usepackage[naturalnames=false]{hyperref}
\hypersetup{
  colorlinks = true,          
  urlcolor = blue, linkcolor = blue, citecolor = blue
}

\urlstyle{rm}
\usepackage[backend=biber]{biblatex}
\addbibresource{references.bib}

\title{Regular Flips in \mptopcom}
\author{Lars Kastner}
\address{
Technische Universit\"at Berlin\\
Chair of Discrete Mathematics/Geometry\\
Stra\ss e des 17.~Juni 136\\
10623 Berlin\\
Germany
}
\email{kastner@math.tu-berlin.de}

      \setlength{\tabcolsep}{3pt}

\begin{document}

\begin{abstract}
A triangulation of a point configuration is regular if it can be given by a
height function, that is every point gets lifted to a certain height and
projecting the lower convex hull gives the triangulation. Checking regularity of
a triangulation usually is done by solving a linear program. However when
checking many flip-connected triangulations for regularity, one can instead ask
which flips preserve regularity. When traversing the flip graph for enumerating
all regular triangulations, this allows for vast reduction of the linear
programs needing to be solved. At the same time the remaining linear programs
will be much smaller.
\end{abstract}
\maketitle

\section{Introduction}

Enumerating regular triangulations is an important problem in many areas of
mathematics, such as tropical geometry \cite{schlaefli,cubic_surfaces}, optimization
\cite[Section~1.2]{Triangulations}, and mathematical physics \cite{ftheory}.
There are two main software systems for enumerating triangulations, namely
\TOPCOM \cite{TOPCOM-paper} and \mptopcom \cite{mptopcom-paper} which is a
fusion of the former and \polymake \cite{DMV:polymake}. This paper will provide
details on recent developments of \mptopcom.

The algorithm of \mptopcom is reverse search \cite{reverse-search}. We will
leave out most details, but essentially reverse search on the flip graph
associates to every triangulation a unique predecessor. In our case this is the
flip neighbor with lexicographically largest GKZ-vector. Taking only the edges
from such a predecessor relation in the flip graph results in the so-called
\emph{reverse search tree}.

The two main advantages of reverse search are that reverse search is
output-sensitive, meaning it only consumes a constant amount of memory and that
it can be parallelized, \mptopcom uses the \emph{budgeted reverse search}
\cite{mplrs}. Further explanation of \mptopcom's internal workings can
be found in \cite{mptopcom-paper} and on the \mptopcom
webpage\footnote{\url{https://polymake.org/mptopcom}}.
Note that parallelization has also been adopted in \TOPCOM.

To enumerate regular triangulations exclusively, one can restrict to regular
triangulations, but still allowing all flips in between. Second, we could
restrict to regular flips only.
The second approach results in a different reverse search tree, as it considers
the subgraph of regular flips of the flip graph.
Thus the goal of this paper is to mitigate the main bottleneck of solving
regularity linear programs (LP):
First, we want to reduce the size of the LPs appearing.
Second, we want to reduce the number of LPs that need to be solved
altogether.

\section{Regular Flips}

In this section we will carve out a description of regular flips between
regular triangulations in terms of GKZ-vectors. We follow the notation and
conventions of the triangulation bible \cite{Triangulations}.

Start with a point configuration $A = \{a_1, \ldots, a_n\}$
and let $T$ be a triangulation of $A$. Recall that the \emph{GKZ-vector}
$\gkz(T) \in \RR^n$ of $T$ is defined as
\[
   \gkz(T)_i = \sum_{s \ni a_i, s\in T} \vol(s).
\]
Now for a flip $f:T\leadsto T'$ we define $\gkz(f):=\gkz(T')-\gkz(T)$.

\begin{example}
   Consider the two triangulations of the standard square $[0,1]^2$ and the
   flip between them.
   \[
      \begin{array}{cccc}
         f: & \quad
            \begin{tikzpicture}[baseline=.4em,scale=.5]
               \node[anchor=east] at (0,0) {$0$};
               \node[anchor=east] at (0,1) {$3$};
               \node[anchor=west] at (1,0) {$1$};
               \node[anchor=west] at (1,1) {$2$};
               \draw (0,0) -- (1,0) -- (1,1) -- (0,1) -- (0,0) -- (1,1);
            \end{tikzpicture}\quad
            & \leadsto & \quad
            \begin{tikzpicture}[baseline=.4em,scale=.5]
               \node[anchor=east] at (0,0) {$0$};
               \node[anchor=east] at (0,1) {$3$};
               \node[anchor=west] at (1,0) {$1$};
               \node[anchor=west] at (1,1) {$2$};
               \draw (0,1) -- (0,0) -- (1,0) -- (1,1) -- (0,1) -- (1,0) ;
            \end{tikzpicture}\\
            & (2,1,2,1) & (-1,1,-1,1) & (1,2,1,2)
      \end{array}
   \]
   The second row contains the GKZ-vectors of the triangulations and the flip
   GKZ-vector between them.
\end{example}

To decide whether $f$ is regular from $\gkz(f)$ it is important
that two flips $f_0:T\leadsto T'$ and $f_1:T\leadsto T''$ have different
GKZ-vectors. For an idea why this could be true, take the following
proposition.

\begin{proposition}\label{prop:flip_gkz}
   Let $f:T\leadsto T'$ be a flip. Then $\gkz(f)\not=0$.
\end{proposition}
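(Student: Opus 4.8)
The plan is to track exactly which coordinates of the GKZ-vector move under the flip and with what sign. Recall that a flip $f:T\leadsto T'$ is supported on a unique circuit $Z=(Z_+,Z_-)$, with associated triangulations $T_\pm(Z)=\{\conv(Z\setminus\{p\}):p\in Z_\pm\}$ of $\conv(Z)$. Concretely there is a common link complex $\mathcal{L}$ of faces disjoint from $Z$ such that $T$ contains the simplices $\conv(\sigma\cup\tau)$ with $\sigma$ a maximal face of $T_+(Z)$ and $\tau$ a maximal face of $\mathcal{L}$, while $T'$ replaces these by the analogous simplices built from $T_-(Z)$; every other simplex is shared by $T$ and $T'$. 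I would base the whole computation on the single identity $\sum_{p\in Z_+}\vol(\conv((Z\setminus\{p\})\cup\tau))=\vol(\conv(Z\cup\tau))=\sum_{p\in Z_-}\vol(\conv((Z\setminus\{p\})\cup\tau))$, valid because each $T_\pm(Z)\ast\{\tau\}$ triangulates $\conv(Z\cup\tau)$.

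First I would show that $\gkz(f)$ is supported on $Z$. For a point $a_k\notin Z$, the only simplices that can enter $\gkz(\cdot)_k$ differently in $T$ and $T'$ are the changed ones, and a changed simplex $\conv(\sigma\cup\tau)$ contains $a_k$ only if $a_k\in\tau$. Fixing such a maximal link face $\tau$ and summing over $\sigma$, the contribution of $a_k$ before the flip is $\sum_{\sigma}\vol(\conv(\sigma\cup\tau))=\vol(\conv(Z\cup\tau))$ by the identity above, and exactly the same total appears afterwards. These cancel, so $\gkz(f)_k=0$.

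Next I would compute $\gkz(f)_i$ for a point $a_i\in Z$. Fix a maximal link face $\tau$ and abbreviate $V_{p,\tau}=\vol(\conv((Z\setminus\{p\})\cup\tau))$. Among the changed simplices built from $\tau$, the point $a_i$ lies in $\conv((Z\setminus\{p\})\cup\tau)$ precisely for the indices $p\neq i$. Feeding this into the total-volume identity, the contributions of the ``full'' side telescope against $\vol(\conv(Z\cup\tau))$ and the net change from $\tau$ collapses to the single term $\pm V_{i,\tau}$, with one sign for $a_i\in Z_+$ and the opposite sign for $a_i\in Z_-$. Summing over all maximal $\tau$ gives $\gkz(f)_i=\pm\sum_\tau V_{i,\tau}$. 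Each $V_{i,\tau}$ is the volume of a maximal, hence full-dimensional, simplex of $T$ and is therefore strictly positive, so every coordinate of $\gkz(f)$ indexed by $Z$ is nonzero. As a circuit is never empty, this yields $\gkz(f)\neq 0$. The sign pattern is consistent with the example, whose flip GKZ-vector $(-1,1,-1,1)$ is supported on the four-point circuit and alternates between the two sides $\{0,2\}$ and $\{1,3\}$.

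The main obstacle is not the telescoping but pinning down the local structure of a flip precisely enough to guarantee that the only simplices changing are the joins $T_\pm(Z)\ast\mathcal{L}$ and that the link complex $\mathcal{L}$ is genuinely common to both sides; once this structural description is in hand the volume bookkeeping is routine. A secondary point to treat carefully is that the relevant simplices are full-dimensional, which is what upgrades $V_{i,\tau}\ge 0$ to $V_{i,\tau}>0$ and hence forces the coordinates on $Z$ to be strictly nonzero rather than merely possibly vanishing; this holds because they are maximal cells of the triangulation $T$.
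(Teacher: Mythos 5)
Your proof is correct, but it takes a genuinely different route from the paper's. The paper's proof of this proposition is indirect and very short: it observes that $\gkz(f)$ is supported on the coordinates of the corank-one configuration, invokes Lemma~2.4.2 of \cite{Triangulations} to conclude that the two triangulations $T_+$ and $T_-$ of that configuration are both \emph{regular}, and then uses the fact that distinct regular triangulations have distinct GKZ-vectors (they are distinct vertices of the secondary polytope), so $\gkz(T_-)-\gkz(T_+)\neq 0$. You instead compute every coordinate of $\gkz(f)$ directly via the telescoping volume identity over the link, obtaining the full sign pattern: strictly positive on $J_+$, strictly negative on $J_-$, zero elsewhere. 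This is precisely the computation the paper carries out \emph{after} the proposition, in the discussion leading to Lemma~\ref{lemma:circuit_lift} and Theorem~\ref{thm:regular_flip}, where the sign pattern is actually needed to recover the circuit from $\gkz(f)$. So your argument proves strictly more than the statement requires, at the cost of having to set up the local structure of a flip (the common link and the two joins $T_\pm(Z)\ast\mathcal{L}$), which you correctly identify as the only real burden; the paper's version gets the non-vanishing for free from known secondary-polytope facts. One small wording point: since $\gkz(T)_i$ sums over simplices having $a_i$ as a \emph{vertex}, the relevant condition is $i\in(Z\setminus\{p\})\cup\tau$, i.e.\ $p\neq i$, rather than $a_i$ lying in the convex hull $\conv((Z\setminus\{p\})\cup\tau)$ --- for $p=i$ the point $a_i$ may well lie in that convex hull without being a vertex (this is the whole point of a circuit). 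Your index bookkeeping uses the correct criterion, so this is only a matter of phrasing.
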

\begin{proof}
   It is already clear that $\gkz(f)$ will have zero entries in all
   coordinates, but those that belong to the corresponding circuit.
   Next Lemma~2.4.2 of \cite{Triangulations} states that the two different
   triangulations of a circuit are regular. But two distinct regular
   triangulations cannot have the same GKZ-vector.
\end{proof}

Every flip arises from a \emph{corank-one configuration} \cite[2.4.1]{Triangulations},
that is a subset $J\subseteq \{1,\ldots,n\}$ such that $\conv(\{a_j|j\in J\})$ is
full-dimensional and there exists a unique, non-trivial affine dependence
relation $\sum_{j\in J}\lambda_j a_j=0$ with $\sum_{j\in J}\lambda_j=0$, of
course up to scalar multiple. The set $J$ can now be divided into three parts:
\[
   J_+\:=\ \{j\in J\ \vert\ \lambda_j>0\},\
   J_0\:=\ \{j\in J\ \vert\ \lambda_j=0\},\
   J_-\:=\ \{j\in J\ \vert\ \lambda_j<0\}.
\]
A corank-one configuration has exactly two triangulations
\cite[Lem.~2.4.2]{Triangulations}, namely
\[
   T_+\ =\ \{J\setminus \{j\}\ \vert\ j\in J_+\},\mbox{ and }
   T_-\ =\ \{J\setminus \{j\}\ \vert\ j\in J_-\}.
\]
We say that a triangulation $T$ of $A$ is \emph{compatible} with a corank-one
configuration $J$ if it contains the simplices of the triangulation $T_+$.
Replacing these by the simplices of $T_-$ is exactly what it means to apply a
flip. The pair $(J_+, J_-)$ is (the \emph{Radon partition} of) a circuit. Note
that a circuit can give rise to at most one flip:
\begin{lemma}\label{lemma:circuit_lift}
   Assume that $J$ is a corank-one configuration with circuit $(J_+,J_-)$
   compatible with the triangulation $T$. Then there cannot be a second
   corank-one configuration compatible with $T$ and the same circuit
   $(J_+,J_-)$.
\end{lemma}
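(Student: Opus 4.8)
The plan is to show that, once the circuit $(J_+,J_-)$ and the triangulation $T$ are fixed, the only apparent freedom in a corank-one configuration lies in the index set $J_0$ of points entering the affine relation with coefficient zero, and then that this freedom is illusory. First I would note that the circuit determines more than the partition: since $(J_+,J_-)$ is a circuit, $J_+\cup J_-$ is a minimal affinely dependent set, so its relation $\sum_{j\in J_+\cup J_-}\lambda_j a_j=0$ is unique up to a positive scalar, and the zero indices are precisely those outside the support. Any second configuration $J'$ with the same circuit therefore shares this support and relation, so $J'=(J_+\cup J_-)\sqcup J_0'$, and the whole claim reduces to proving $J_0=J_0'$.

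The second step is to read $J_0$ off from $T$. Every simplex of $T_+=\{J\setminus\{j\}\mid j\in J_+\}$ contains the face spanned by $J\setminus J_+=J_-\cup J_0$, and in fact $\bigcap_{j\in J_+}(J\setminus\{j\})=J_-\cup J_0$; hence $J_0$ is exactly the set of vertices common to all simplices of $T_+$, with the known indices $J_-$ removed. So it suffices to identify the simplices constituting $T_+$ intrinsically, from $T$ and the circuit alone. The natural device is the circuit center $c$, the unique point of $\operatorname{relint}\conv(\{a_j\mid j\in J_+\})\cap\operatorname{relint}\conv(\{a_j\mid j\in J_-\})$, which depends on the circuit only: the simplices of $T_+$ are those of $T$ incident to $c$ on the $J_+$-side, and their shared link beyond $J_-$ is $J_0$. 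As $c$ and $T$ are fixed, this recovers a single $J_0$.

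The step I expect to be the real obstacle is proving that this ``link'' is genuinely unambiguous, and here the hypothesis must be used in its operative form. Mere containment $T\supseteq T_+$ does not suffice: one can exhibit a single triangulation containing $T_+$ for two different choices of $J_0$ at once, precisely when a point of $J_-$ lies on a face of $\partial\conv(J)$ that $T_+$ and $T_-$ subdivide differently while being incident to simplices of $T$ on both sides of the circuit. What excludes this, and makes the link well defined, is that $T$ is compatible in the sense that substituting $T_-$ for $T_+$ again yields a triangulation, as the surrounding discussion stipulates. I would therefore finish by contradiction: given $a_k\in J_0\setminus J_0'$ and a fixed $j_0\in J_+$, the simplices $J\setminus\{j_0\}$ and $J'\setminus\{j_0\}$ both lie in $T$ and share the face $(J_+\setminus\{j_0\})\cup J_-$ but disagree on whether $a_k$ is a vertex; performing the flip on $J$ removes the former while leaving the latter, and checking that the result is still a simplicial complex across this shared face forces $a_k\notin J_0$ after all. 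Hence $J_0=J_0'$ and $J=J'$.
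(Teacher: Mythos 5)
Your decisive move --- apply the flip on $J$, observe that the simplices coming from $J'$ survive it because no maximal simplex of $T_+$ can coincide with one of $T_+'$ once $J_0\neq J_0'$, and then contradict the triangulation property of the result --- is exactly the route the paper takes, and your opening reduction of the whole statement to $J_0=J_0'$ matches the paper as well. The detour in your second paragraph through the circuit center and the ``link'' of the $T_+$-simplices is neither completed nor needed for this route; it can be deleted without loss.

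The gap is in your last sentence. ``Checking that the result is still a simplicial complex across this shared face'' does not yet name a violation, and the face you point at, spanned by $(J_+\setminus\{j_0\})\cup J_-$, is the wrong locus: it omits $j_0$, whereas the contradiction crucially involves $a_{j_0}$ reappearing after the flip. Concretely, the inserted simplices $J\setminus\{j\}$ for $j\in J_-$ each contain \emph{all} of $J_+$ and hence have $\conv(\{a_j\mid j\in J_+\})$ as a face, while the surviving simplex $J'\setminus\{j_0\}$ has $\conv(\{a_j\mid j\in J_-\})$ as a face. The affine dependence forces these two convex hulls to meet (at the circuit point, which lies in the relative interior of each), yet $J_+\cap J_-=\emptyset$ and $a_{j_0}$ is not a vertex of the survivor, so the intersection of an inserted simplex with the survivor cannot be a common face of both. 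That is the contradiction the paper exhibits; it does not occur ``across'' the face you name, and it does not single out your chosen $a_k$ --- it shows outright that $J$ and $J'$ cannot be compatible with $T$ simultaneously. With that step supplied, your proof coincides with the paper's.
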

\begin{proof}
   Write $J=J_+\cup J_0\cup J_-$ and assume that there is $J'=J_+\cup J_0'\cup
   J_-$ with $J_0\not= J_0'$.
   Denote the (sub-)triangulations associated to $J$ by $T_+$ and $T_-$, and
   those of $J'$ by $T_+'$ and $T_-'$.
   Each of these corank-one configurations give rise to a flip we can apply to
   $T$.
   By assumption, no maximal simplex of $T_+$ is part of $T_+'$ and vice versa,
   since every maximal simplex of $T_+$ contains $J_0$ and of $T_+'$ contains
   $J_0'$.
   Now if we apply the flip associated to $J$, the maximal simplices of $T_+'$
   remain untouched. In particular, every simplex of $T_+'$ will still have a
   simplex corresponding to $J_-$ as a face. On the other hand, $T_-$ got
   inserted and here every maximal simplex has a face corresponding to $J_+$.
   However
   \[
      \conv(\{a_j\ \vert\ j\in J_+\})\cap\conv(\{a_j\ \vert\ j\in J_-\})\ \not=\ \emptyset,
   \]
   stemming from the affine dependence equation, while $J_+\cap J_-=\emptyset$.
   That means that this intersection of faces does not form a face of both and
   hence is invalid. Thus such $J$ and $J'$ cannot simultaneously exist.
\end{proof}

Let us investigate the GKZ-vector of a flip. We already deduced that it
can only have non-zero entries in the indices corresponding to the corank-one
configuration $J$. Thus we can write the subvector of $\gkz(f)$ in the indices
of $J$ as
\[
   \gkz(f)_J\ =\ \gkz(T_-)-\gkz(T_+).
\]
Next, one deduces that the points $a_j$ for $j\in J_0$ are part of every simplex
for both $T_+$ and $T_-$, hence for these points we have $\gkz_j(f)=0$.
Now take $j\in J_+$. Then we have
\[
   \gkz_j(f)\ =\ \gkz_j(T_-)-\gkz(T_+)\ =\ \sum_{a_j\in S\in T_-}\vol(S)-\sum_{a_j\in S\in T_+}\vol(S).
\]
By construction though, $a_j$ is part of every simplex of $T_-$, so the first
part is just the volume of the convex hull of the corank-one configuration,
write $VJ=\vol(\conv\{a_i\ \vert\ i\in J\})$.
There is exactly one maximal simplex in $T_+$ that $a_j$ is not part of, namely
stemming from $J\setminus \{j\}$. Thus we get
\[
   \begin{array}{rcl}
      \gkz_j(f) & = & VJ - (VJ-\vol(\conv\{a_i\ \vert\ i\in J\setminus\{j\}\}))\\
      & = & \vol(\conv\{a_i\ \vert\ i\in J\setminus\{j\}\})\ >\ 0.
   \end{array}
\]
Similarly we get $\gkz_j(f)<0$ for $j\in J_-$.
Thus $\gkz(f)$ uniquely determines the corresponding circuit and
using Lemma~\ref{lemma:circuit_lift} a unique corank-one
configuration. Hence given $\gkz(f)$ we can uniquely identify the corresponding
flip $f$. In particular, for two flips $f$ and $f'$ of $T$, $\gkz(f)$ cannot be
a scalar multiple of $\gkz(f')$.

Note that for a fixed regular triangulation $T$, all the vectors $\gkz(f)$ for $f:T\leadsto
T'$ form the edge cone of the secondary polytope at the vertex $\gkz(T)$. The
dual of this cone is $\sec(T)$. With this in mind we can state the
following theorem.

\begin{theorem}\label{thm:regular_flip}
   Let $f:T\leadsto T'$ be a flip between two triangulations, with $T$ being
   regular. If $\gkz(f)$ forms a facet of $\sec(T)$, then $T'$ is regular.
   Equivalently, if $\gkz(f)$ generates an extremal ray of the edge cone of the
   secondary polytope at $T$, then $T'$ is regular.
\end{theorem}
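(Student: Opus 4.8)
The plan is to read off the conclusion from the dictionary between the secondary polytope and its regular triangulations. Recall that the vertices of the secondary polytope are exactly the GKZ-vectors of the regular triangulations of $A$, and that the edges incident to the vertex $\gkz(T)$ are precisely the regular flips out of $T$, each such edge joining $\gkz(T)$ to the GKZ-vector of another regular triangulation. Since the edge cone at $\gkz(T)$ is by definition the cone generated by the secondary polytope translated so that $\gkz(T)$ becomes the origin, its extremal rays coincide with these incident edges. First I would record the consequence that every extremal ray of the edge cone is spanned by $\gkz(g)$ for some regular flip $g\colon T\leadsto T''$ landing in a regular triangulation $T''$.

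With this in hand the argument is short. Assume $\gkz(f)$ generates an extremal ray $R$ of the edge cone. By the preceding remark there is a regular flip $g\colon T\leadsto T''$ with $\gkz(g)$ spanning the same ray $R$, and since both vectors lie on the half-line $R$ and are nonzero by Proposition~\ref{prop:flip_gkz}, they are positive scalar multiples of one another. But the analysis preceding the theorem shows that distinct flips of $T$ never have proportional GKZ-vectors; hence $f=g$, and therefore $T'=T''$ is regular.

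To match the two formulations I would invoke cone duality: $\sec(T)$ is the dual of the edge cone, and under duality the extremal rays of a cone correspond to the facets of its dual. Thus ``$\gkz(f)$ generates an extremal ray of the edge cone'' and ``$\gkz(f)$ forms a facet of $\sec(T)$'' encode the same condition, which is exactly the equivalence asserted in the statement.

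The hard part will be the very first step, namely justifying that the extremal rays of the edge cone are realized by regular flips. This relies on two standard ingredients from the theory of secondary polytopes: that the $1$-skeleton of the secondary polytope is the regular flip graph, and that the tangent cone of a polytope at a vertex is generated by the edges through that vertex. Once these are granted, everything else is the uniqueness bookkeeping above, drawing on the proportionality statement already established in the text, so no genuinely new computation is needed.
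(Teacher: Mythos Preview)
Your proposal is correct and follows essentially the same route as the paper's proof: identify an extremal ray of the edge cone with an edge of the secondary polytope, invoke the uniqueness of a flip with a given GKZ-direction established just before the theorem, and conclude that the other endpoint is a vertex, hence a regular triangulation. You are simply more explicit than the paper about the two background facts (the $1$-skeleton of the secondary polytope is the regular flip graph, and the tangent cone at a vertex is generated by its incident edges), which the paper compresses into the single phrase ``dually it corresponds to an edge of the secondary polytope.''
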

\begin{proof}
   Assume $\gkz(f)$ forms a facet, then dually it corresponds to an edge of the
   secondary polytope. By the discussions above it is the unique flip with that
   edge direction. Hence it is the unique flip to the vertex neighboring
   $\gkz(T)$ in direction $\gkz(f)$. But vertices of the secondary polytope
   correspond to regular triangulations. Hence $T'$ must be regular.
\end{proof}

Usually the linear program we need to solve to determine whether $\gkz(f)$
forms a facet of $\sec(T)$ is much smaller than the LP for determining whether
$T$ is regular, finishing our goals of reducing the size
of the LPs we need to solve.

\section{Caching}
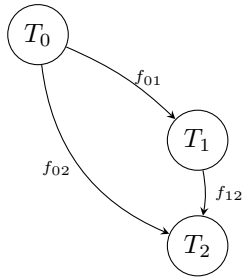
\begin{figure}
   \begin{center}
      \begin{tikzpicture}[scale=.7]
         \node[draw, circle] (t0) at (0,0) {$T_0$};
         \node[draw, circle] (t1) at (3,-2) {$T_1$};
         \node[draw, circle] (t2) at (3,-4) {$T_2$};
         \path[-stealth] (t0) edge [bend right = 30] node[anchor=east, font=\tiny] {$f_{02}$} (t2);
         \path[-stealth] (t0) edge [bend right = -10] node[anchor=west, font=\tiny] {$f_{01}$} (t1);
         \path[-stealth] (t1) edge [bend right = -10] node[anchor=west, font=\tiny] {$f_{12}$} (t2);
      \end{tikzpicture}
   \end{center}
   \caption{Flips between three triangulations}\label{fig:broken-cache}
\end{figure}
At this point we want to illustrate how the new approach interferes with our
previous caching approach \cite[Sec.~7.4]{mptopcom-paper}, where we would cache
whether a triangulation was regular, and consider a flip valid whenever the
target triangulation is regular. Using this caching approach can result in
mixing in non-regular flips with the regular flips that we want to restrict to.
For a flip $f_{ij}:T_i\leadsto T_j$
denote the reverse flip as $f_{ji}:T_j\leadsto T_i$.

To further explain this example, we need the notion of up- and downflips, which
for the sake of brevity will just be the vertical direction of the flip in
Figure~\ref{fig:broken-cache}. A flip is part of the reverse search
tree if its reverse is the largest regular upflip from the target triangulation, see
\cite{mptopcom-paper} for further details.

In Figure~\ref{fig:broken-cache}, assume that the $T_i$ are regular
triangulations, and that $f_{01}$ and $f_{12}$ are regular flips, while
$f_{02}$ is not. Furthermore assume that regularity of $T_2$ was not cached
when we were in $T_0$. Then $f_{02}$ was correctly identified as non-regular
and thus not used. But when we arrive in $T_2$ via $f_{12}$, $f_{20}$ is
considered valid, since caching determines that $T_0$ is regular and actual
regularity of $f_{20}$ is never checked. Since $f_{02}$ is the largest upflip
from $T_2$, the algorithm incorrectly determines that $f_{12}$ is not part of
the reverse search tree and $T_2$ is never visited.

Note that this is a very subtle phenomenon, as it can only occur for point
configurations that are large enough to have non-regular flips. Furthermore
these non-regular flips need to be the largest upflips to actually cause
trouble. The smallest point configuration exhibiting these properties was
$\Delta_2\times\Delta_5$.

\section{Avoiding Linear Programs}

The edge cone of the secondary polytope at $\gkz(T)$ is spanned by the
$v^1,\ldots, v^n$, with $v^i=\gkz(f_i)$, where $f_i$ is the $i$-th flip of the
triangulation $T$. We assume that $T$ is regular, hence this cone is pointed. None of the $v^i$ can be scalar multiples of each other, due to
Lemma~\ref{lemma:circuit_lift}.
Both conditions are necessary for the following tricks to work.
According to Thm.~\ref{thm:regular_flip}, we need to find the extremal rays among the
$v^i$, translating to solving the following linear program for every
$i=1,\ldots,n$:
\[
   v^i\ =\ \sum_{j\not=i} x_j\cdot v^j,\mbox{ with } x_j\ge 0.
\]
Thus, for every regular triangulation $T$ we have to solve $n$
LPs, where $n$ is the number of flips from $T$.

One could try to compute the rays of this cone directly, but
due the high dimension this has its own set of challenges \cite{polymake:2017}.
Nevertheless, considering all flips at the same time allows us to preprocess this system.
As remarked in the proof of Prop.~\ref{prop:flip_gkz}, the only non-zero
entries of a flip GKZ-vector corresponds to the circuit giving rise to
the flip.
Thus most entries of $v^i$ will be zero, unless in boundary case point
configurations. Hence the system we are considering is quite sparse and there
are a few simple tricks we can apply to spot some rays immediately.

\begin{figure}
   \begin{center}
      \tiny
   \begin{tabular}{r|rrrrrrrrrrrrrrrrrrr}
      \toprule
      & $0$ & $1$ & $2$ & $3$ & $4$ & $5$ & $6$ & $7$ & $8$ & $9$ & $10$ & $11$ & $12$ & $13$ & $14$ & $15$ & $16$ & $17$\\
      \midrule
      $v^{1}$ & $0$ & $0$ & $0$ & $0$ & $0$ & $0$ & $0$ & $0$ & $-3$ & $3$ & $0$ & $0$ & $0$ & $0$ & $3$ & $-3$ & $0$ & $0$\\
      $v^{2}$ & $0$ & $0$ & $0$ & $0$ & $0$ & $0$ & $-1$ & $0$ & $0$ & $0$ & $0$ & $1$ & $1$ & $0$ & $0$ & $0$ & $0$ & $-1$\\
      $v^{3}$ & $-1$ & $0$ & $0$ & $0$ & $1$ & $0$ & $1$ & $0$ & $0$ & $0$ & $-1$ & $0$ & $0$ & $0$ & $0$ & $0$ & $0$ & $0$\\
      $v^{4}$ & $0$ & $0$ & $1$ & $0$ & $-1$ & $0$ & $0$ & $0$ & $-1$ & $0$ & $1$ & $0$ & $0$ & $0$ & $0$ & $0$ & $0$ & $0$\\
      $v^{5}$ & $-1$ & $1$ & $0$ & $0$ & $0$ & $0$ & $1$ & $0$ & $0$ & $0$ & $0$ & $-1$ & $0$ & $-1$ & $0$ & $0$ & $0$ & $1$\\
      $v^{6}$ & $0$ & $0$ & $0$ & $0$ & $-1$ & $1$ & $0$ & $0$ & $0$ & $0$ & $0$ & $0$ & $0$ & $0$ & $0$ & $0$ & $1$ & $-1$\\
      $v^{7}$ & $0$ & $0$ & $-3$ & $3$ & $0$ & $0$ & $0$ & $0$ & $3$ & $-3$ & $0$ & $0$ & $0$ & $0$ & $0$ & $0$ & $0$ & $0$\\
      $v^{8}$ & $0$ & $0$ & $0$ & $0$ & $0$ & $0$ & $0$ & $1$ & $0$ & $0$ & $-1$ & $0$ & $0$ & $-1$ & $0$ & $0$ & $1$ & $0$\\
      $v^{9}$ & $0$ & $0$ & $0$ & $-1$ & $1$ & $0$ & $0$ & $0$ & $0$ & $1$ & $0$ & $-1$ & $0$ & $0$ & $0$ & $0$ & $-1$ & $1$\\
      $v^{10}$ & $0$ & $0$ & $0$ & $0$ & $0$ & $0$ & $1$ & $0$ & $0$ & $-1$ & $0$ & $0$ & $-1$ & $0$ & $0$ & $1$ & $0$ & $0$\\
      $v^{11}$ & $0$ & $-1$ & $1$ & $0$ & $0$ & $0$ & $0$ & $0$ & $0$ & $0$ & $0$ & $0$ & $0$ & $1$ & $-1$ & $0$ & $0$ & $0$\\
      $v^{12}$ & $1$ & $-1$ & $0$ & $0$ & $0$ & $0$ & $-1$ & $0$ & $0$ & $0$ & $1$ & $0$ & $0$ & $1$ & $0$ & $0$ & $-1$ & $0$\\
\bottomrule
   \end{tabular}
      \normalsize
   \end{center}
   \caption{An example set of flip GKZ-vectors for a regular triangulations of $\Delta_2\times\Delta_5$. The corresponding triangulation can be found at our GitHub repository.}
   \label{fig:flip_matrix}
\end{figure}

We present these in a simplified form, where $v^1$ is the pivot. Note that by a
convex combination inside of a cone we mean a positive linear combination of
the generators. We will use the term ray to mean both ray and ray generator.

\begin{proposition}\label{prop:one_pos}
   Assume that $v^1[0]>0$ and $v^i[0]=0$ for all other $i\not=1$. Then $v^1$
   is a ray and the subsystem $v^2,\ldots,v^n$ can be solved
   without including $v^1$.
\end{proposition}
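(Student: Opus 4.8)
The plan is to exploit the distinguished coordinate $0$, in which $v^1$ is the only generator carrying a nonzero entry, and to combine this with the nonnegativity of the coefficients in the defining linear programs. Both claims will then follow by simply reading off coordinate $0$.

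First I would show that $v^1$ is a ray by proving that its LP is infeasible. Suppose for contradiction that $v^1 = \sum_{j\neq 1} x_j v^j$ with all $x_j \geq 0$. Evaluating this equation in coordinate $0$ gives $v^1[0] = \sum_{j\neq 1} x_j\, v^j[0] = 0$, since $v^j[0]=0$ for every $j\neq 1$. This contradicts $v^1[0]>0$, so no such representation exists and $v^1$ spans an extremal ray of the cone.

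Second, to justify removing $v^1$ from the system, I would fix any $i\neq 1$ and consider an arbitrary nonnegative representation $v^i = \sum_{j\neq i} x_j v^j$. Reading off coordinate $0$ yields $0 = v^i[0] = x_1\, v^1[0]$, because every term with $j\neq 1$ contributes $0$ there. As $v^1[0]>0$ and $x_1\geq 0$, this forces $x_1=0$. Hence $v^1$ can never appear with positive coefficient in any nonnegative combination equal to $v^i$, so the LP testing $v^i$ has exactly the same feasibility with or without $v^1$ among the generators. The extremality status of each $v^i$ with $i\neq 1$ is therefore unchanged by deleting $v^1$, which is precisely the assertion that the subsystem $v^2,\ldots,v^n$ can be solved without including $v^1$.

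There is no genuine computational obstacle here; the entire content lies in observing that nonnegativity of the coefficients turns the support condition in coordinate $0$ into the forcing identity $x_1=0$. The only point requiring a little care is to phrase the second part as an \emph{equivalence} of feasibility between the full LP and the reduced LP, so that we truly shrink the problem size rather than merely record a property of one particular solution.
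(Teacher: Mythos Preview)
Your proof is correct and follows essentially the same approach as the paper: both arguments read off coordinate $0$ and use nonnegativity of the coefficients to conclude that $v^1$ cannot be written from the others and cannot contribute to any representation of the others. The paper phrases the second part as ``the vectors $v^i$ with $v^i[0]=0$ form a face of the cone,'' which is exactly your observation that $x_1=0$ is forced.
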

\begin{proof}
   Every convex combination using $v^1$ will have first coordinate $>0$, so it
   cannot give $v^2,\ldots,v^n$. Similarly convex combinations of
   $v^2,\ldots,v^n$ result in the first entry being zero. Thus all
   vectors $v^i$ with $v^i[0]=0$ form a face of the cone.
\end{proof}
\begin{example}
   One example is $v^{6}$ in Figure~\ref{fig:flip_matrix}, it is the
   only vector with $v^{6}[5]>0$.
\end{example}

\begin{remark}
   The above proposition can be generalized such that we have
   \[
      v^1[0],\ldots,v^m[0]>0 \mbox{ and } v^{m+1}[0],\ldots,v^n[0]=0.
   \]
   However then we cannot deduce that $v^1,\ldots,v^m$ are rays, but the linear
   programs for $v^{m+1},\ldots,v^n$ become much smaller.
\end{remark}

Next we will reduce to smaller systems by including some modified vectors that
are needed to solve the smaller system correctly, but that are already known
not to be rays, so we do not need to solve a LP for these.

\begin{proposition}
   Assume that $v^1[0]>0$ and that $v^2[0]<0$, while $v^i[0]=0$ for all other
   $i>2$. Then $v^1$ and $v^2$ are rays and it suffices to solve the smaller
   system $v^3,\ldots,v^n, v^1[0]\cdot v^2- v^2[0]\cdot v^1$, where the last
   vector is not a ray of the larger system.
\end{proposition}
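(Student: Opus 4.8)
The plan is to handle the statement in three parts: first show that $v^1$ and $v^2$ are rays, then verify that the auxiliary vector $w := v^1[0]\cdot v^2 - v^2[0]\cdot v^1$ is not a ray of the full system, and finally prove that for every $i\geq 3$ the ray status of $v^i$ is unchanged when we pass from $\{v^1,\ldots,v^n\}$ to the reduced system $\{v^3,\ldots,v^n,w\}$. Together these give exactly the claimed reduction: the two rays $v^1,v^2$ are identified without an LP, the non-ray $w$ is known in advance, and the LPs for $v^3,\ldots,v^n$ are solved in the smaller (and lower-dimensional) system.

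For the first part I would imitate the sign argument of Proposition~\ref{prop:one_pos}. Given a putative representation $v^1=\sum_{j\neq 1}x_j v^j$ with all $x_j\geq 0$, reading off the zeroth coordinate and using $v^j[0]=0$ for $j\geq 3$ collapses everything to $v^1[0]=x_2 v^2[0]$; the left-hand side is positive while the right-hand side is $\leq 0$ since $x_2\geq 0$ and $v^2[0]<0$, a contradiction. The symmetric computation, now comparing $v^2[0]<0$ with $x_1 v^1[0]\geq 0$, shows $v^2$ is a ray. For the second part, note $w[0]=v^1[0]v^2[0]-v^2[0]v^1[0]=0$, so $w$ lies in the hyperplane $\{x[0]=0\}$, and observe that $w$ is a positive combination of $v^1$ and $v^2$ with coefficients $v^1[0]>0$ and $-v^2[0]>0$. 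Since no two flip GKZ-vectors are scalar multiples of one another by Lemma~\ref{lemma:circuit_lift} and the discussion following it, $v^1$ and $v^2$ are linearly independent, so $w$ sits in the relative interior of the two-dimensional cone they span and hence is not extremal.

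The heart of the argument, and the step I expect to require the most care, is the equivalence for $i\geq 3$. Starting from a representation $v^i=\sum_{j\neq i}x_j v^j$ with $x_j\geq 0$, comparing zeroth coordinates and using $v^i[0]=0$ forces $x_1 v^1[0]+x_2 v^2[0]=0$. Because $v^1[0]>0$ and $v^2[0]<0$, this equation pins both coefficients to a single scalar: $x_1=-s\,v^2[0]$ and $x_2=s\,v^1[0]$ for some $s\geq 0$, so that $x_1 v^1+x_2 v^2=s\,w$ exactly. Substituting yields $v^i=s\,w+\sum_{j\geq 3,\,j\neq i}x_j v^j$, a nonnegative combination inside the reduced system. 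Conversely, any such reduced representation $v^i=s\,w+\sum_{j\geq 3,\,j\neq i}x_j v^j$ with $s,x_j\geq 0$ expands, via $w=v^1[0]v^2-v^2[0]v^1$ and the positivity of $v^1[0]$ and $-v^2[0]$, back into a nonnegative combination in the full system. Hence $v^i$ is a ray of $\{v^1,\ldots,v^n\}$ if and only if it is a ray of $\{v^3,\ldots,v^n,w\}$.

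The only genuine subtlety is the sign analysis in the last paragraph that collapses the entire $v^1,v^2$-contribution to a single nonnegative multiple of $w$; once that bookkeeping is in place the two directions of the equivalence are routine substitutions. Combining the three parts establishes that solving the smaller system suffices, with $v^1,v^2$ known to be rays and $w$ known not to be one.
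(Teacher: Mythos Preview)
Your proof is correct and follows the same approach as the paper: a sign argument on the zeroth coordinate identifies $v^1,v^2$ as rays, the cancellation $x_1 v^1[0]+x_2 v^2[0]=0$ forces the $v^1,v^2$-part of any representation of $v^i$ ($i\geq 3$) to be a nonnegative multiple of $w$, and $w$ is visibly a positive combination of $v^1,v^2$. You simply make explicit the bookkeeping the paper leaves to the reader; the one superfluous step is invoking Lemma~\ref{lemma:circuit_lift} for linear independence of $v^1,v^2$, since $v^1[0]>0$ and $v^2[0]<0$ already rule out any scalar relation.
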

\begin{proof}
   Considering convex combinations it is again obvious that $v^1$ and $v^2$ are
   rays. Now for a convex combination resulting in one of the vectors
   $v^3,\ldots,v^n$, we need the first entries of $v^1$ and $v^2$ to cancel,
   resulting in the last vector. However this last vector is already a convex
   combination of $v^1$ and $v^2$.
\end{proof}

\begin{remark}\label{rem:two_sided}
   Just as with Prop.~\ref{prop:one_pos}, we can generalize the previous proposition:
   Assume that $v^1[0] > 0$, $v^2[0],\ldots, v^m[0]<0$ and $v^i[0]=0$ for all
   other $i>m$. Then $v^1$ is a ray and it suffices to solve the smaller system
   \[
      v^{m+1},\ldots,v^n,\; v^1[0]\cdot v^2-v^2[0]\cdot v^1,\;\ldots,\;v^1[0]\cdot v^m-v^m[0]\cdot v^1,
   \]
   where the last $m-1$ vectors are not rays of the larger system.
\end{remark}

\begin{example}
   There are several examples for this situation in
   Figure~\ref{fig:flip_matrix}, e.g. the columns 0, 1, 2, 3, 4, 7, 8 and 9
   work for Remark~\ref{rem:two_sided}. On the other hand, column 6 does not
   work.

   Given a column index it is clear which operation to apply and which
   subsystem to consider. Take the following sequence of column indices:
   $[5,7,16,0,1,2,8,9,6]$. After applying these operations the only rows left
   to consider are $v^3, v^5, v^9, v^{12}$, all other rows have now been
   confirmed to be rays.
\end{example}

\begin{remark}\label{remark:small_lp}
   In the case that $n=2$, where we need to determine whether $v^1$ is a ray
   and we already know that $v^2$ is not a ray of the larger system, it is
   enough to determine whether $v^2$ is a scalar multiple of $v^1$.
\end{remark}

All of these tricks can be applied recursively to the smaller subsystems. They
are much cheaper than solving actual LPs.

\begin{example}
   In Table~\ref{tab:regular_flips:n_LPs} we let \mptopcom 1.4 output the number of
   LPs it actually solved for different examples.
   We chose the four dimensional cube $I^4$ since it has become the standard
   first benchmark for enumeration of triangulations, $P_5$ from
   \cite{shortest_paths} due to its relevance in optimization, and the products
   of simplices due to their connection to matroid theory \cite{multisplits}
   and tropical geometry \cite{tropical_convexity,tropical_essentials}.
   The entry $0$ for $\Delta_2\times\Delta_5$ means that all LPs were of the type
   from in Remark~\ref{remark:small_lp}.
   An interpretation of a small number of LPs is that non-regular
   triangulations have very few flips, i.e. have high flip-deficiency
   \cite[Sec.~7.2]{Triangulations}.
\end{example}

\begin{table}[th]\centering
   \caption{Number of linear programs actually solved by \mptopcom 1.4 (single threaded) for some examples. (with \texttt{--regular --orbit-cache 40000 --flip-cache 40000})}
   \label{tab:regular_flips:n_LPs}
   \begin{tabular}{lrrrr}
      \toprule
      Example & \hspace{1cm}$I^4$ & \hspace{1cm}$P_5$ & \hspace{.3cm}$\Delta_2\times\Delta_5$ & \hspace{.3cm}$\Delta_2\times\Delta_6$\\
      \midrule
      $\#$ orbits of regular triangulations
      & $235\,277$ & $27\,248$ & $13\,621$ & $531\,862$\\
      Linear programs solved & $6\,642$ & $638$ & $0$ & $261$ \\
      \bottomrule
   \end{tabular}
\end{table}

\section{Experimental results}


\begin{table}[th]\centering
   \caption{Example timings for enumerating regular triangulations}
   \label{tab:regular_flips:experimental_results}
   \centering
   \begin{tabular}{rlrrrr}
      \toprule
      & Example & \hspace{1cm}$I^4$ & \hspace{1cm}$P_5$ & \hspace{.3cm}$\Delta_2\times\Delta_5$ & \hspace{.3cm}$\Delta_2\times\Delta_6$\\
         \midrule
         1. & \mptopcom 1.1 (20 threads)      & 1159.15 & 793.61 & 67.44 & 15786.31\\
         2. & \mptopcom 1.2 (20 threads)      & 1380.39 & 601.68 & 67.61 & 15379.18\\
         3. & \mptopcom 1.3 (20 threads)      &  991.45 & 170.66 & 52.39 &  2670.98\\
         4. & \mptopcom 1.4 (20 threads)      &   82.79 &  31.22 &  9.44 &   405.05\\
         \midrule
         5. & \TOPCOM 1.1.2 (single threaded) &  709.97 &     -- & 50.17 &  5178.04 \\
         6. & \mptopcom 1.4 (single threaded)     &  958.92 & 250.55 & 30.22 &  5327.87 \\
         7. & \mptopcom 1.4 (single threaded)     &  495.98 &  83.17 & 20.20 &  3641.78 \\
         8. & \mptopcom 1.4 (single threaded)     &  339.08 &  78.78 & 20.04 &  2688.20 \\
      \bottomrule
   \end{tabular}
   \vspace{.4em}
   
   All timings are measured in seconds.
\end{table}

Table~\ref{tab:regular_flips:experimental_results} contains a few example runs
for different settings, all run on the cluster of the mathematics department
of TU Berlin.
The examples are the same as in Table~\ref{tab:regular_flips:n_LPs}.
The runs 1 and 2 are almost the same except for small outliers caused
by other jobs running in parallel on the cluster.
We ran \TOPCOM with the option \verb|--affinesymmetries|, since affine
symmetries are assumed for \mptopcom anyway.
For $P_5$ we only counted \emph{central} regular triangulations, i.e.
triangulations such that all simplices have the first point of the input as a
vertex. This option does not exist for \TOPCOM to the best of our knowledge.
The default cache settings in run 6 of \mptopcom are not enough to beat \TOPCOM, thus
for run 7 we increased the cache size of all three caches to $100\,000$ and for
run 8 to $200\,000$.

\begin{remark}
   Our GitHub repository contains some valgrind profiles
   for of \mptopcom 1.2, 1.3, and 1.4 running on $I^4$. While
   \mptopcom 1.2 and 1.3 spend 88\% of the runtime checking regularity, this
   number is only 43\% for \mptopcom 1.4. Furthermore \mptopcom 1.4 spends only
   1\% of the time solving LPs, while \mptopcom 1.3 spends 79\% and
   \mptopcom 1.2 spends 32\% of the time on LPs. The latter difference is due
   to \mptopcom 1.2 having to assemble much larger LPs for regularity of
   triangulations, while \mptopcom 1.3 only solves flip regularity LPs, but a
   much larger number of these.
\end{remark}
Supplementary material, such as cluster scripts, our input files used and the
benchmark logs, is available on GitHub in the repository
\begin{center}
   \url{https://github.com/dmg-lab/regular_flips_in_mptopcom}.
\end{center}

\subsubsection{Acknowledgments}
We are very grateful to Michael Joswig, Benjamin Lorenz, Marta Panizzut, and Francisco Santos Leal for many helpful discussions.
Furthermore we are thankful to J\"org Rambau for the friendly competition.
The author has been supported by MaRDI \cite{mardi} under DFG project ID 460135501.

\printbibliography

\end{document}